\providecommand{\U}[1]{\protect\rule{.1in}{.1in}}
\newtheorem{theorem}{Theorem}
\newtheorem{corollary}[theorem]{Corollary}
\newtheorem{lemma}[theorem]{Lemma}
\newtheorem{remark}[theorem]{Remark}
\newenvironment{proof}[1][Proof]{\noindent\textbf{#1.} }{\ \rule{0.5em}{0.5em}}
\begin{document}

\title{Euler sums of generalized harmonic numbers and connected extensions}
\author{M\"{u}m\"{u}n CAN, Levent KARGIN, Ayhan D\.IL, G\"{u}ltekin SOYLU\\
Department of Mathematics, Akdeniz University, Antalya, Turkey.}
\date{}
\maketitle

\begin{abstract}
This paper presents the evaluation of the Euler sums of generalized
hyperharmonic numbers $H_{n}^{\left(  p,q\right)  }$
\[
\zeta_{H^{\left(  p,q\right)  }}\left(  r\right)  =\sum\limits_{n=1}^{\infty
}\dfrac{H_{n}^{\left(  p,q\right)  }}{n^{r}}%
\]
in terms of the famous Euler sums of generalized harmonic numbers. Moreover,
several infinite series, whose terms consist of certain harmonic numbers and
reciprocal binomial coefficients, are evaluated in terms of Riemann zeta
values.

\end{abstract}

\section{Introduction}

The classical Euler sum $\zeta_{H}\left(  r\right)  $ is the following
Dirichlet series
\[
\zeta_{H}\left(  r\right)  =\sum\limits_{n=1}^{\infty}\dfrac{H_{n}}{n^{r}},
\]
where $H_{n}$ is the $n$th harmonic number. This series is also known as the
harmonic zeta function. The famous Euler's identity for this sum is
\cite{CS,LE,Ni}%
\begin{equation}
2\zeta_{H}\left(  r\right)  =\left(  r+2\right)  \zeta\left(  r+1\right)
-\sum_{j=1}^{r-2}\zeta\left(  r-j\right)  \zeta\left(  j+1\right)  ,\text{
}r\in\mathbb{N}\backslash\left\{  1\right\}  , \label{ES}%
\end{equation}
where $\zeta\left(  r\right)  $ is the classical Riemann zeta function (for
more details, see for instance \cite{SrC}). Many generalizations of Euler sums
(the so called Euler-type sums) are given using generalizations of harmonic
numbers (see \cite{AC,BA,BJG,Bo,BK,Br,DiMeCe,KK,S1,S,Xu1,XYS,YW,Y,Y1,WL,ZCB}).
Evaluation of Euler-type sums and construction of closed forms are active
fields of study in analytical number theory. Furthermore \cite{BJG,Bo, Br, C,
C2} are some of the studies that make this area interesting in the sense that
Euler sums have potential applications in quantum field theory and knot
theory, especially in evaluation of Feynman diagrams.

Euler actually considered also the more general form \cite{BB, CB, LE,Ni}%
\begin{equation}
\zeta_{H^{\left(  p\right)  }}\left(  m\right)  =\sum_{n=1}^{\infty}%
\frac{H_{n}^{\left(  p\right)  }}{n^{m}}, \label{g1}%
\end{equation}
where $H_{n}^{\left(  p\right)  }$ defined by
\[
H_{n}^{\left(  p\right)  }=1+\frac{1}{2^{p}}+\frac{1}{3^{p}}+\cdots+\frac
{1}{n^{p}},\text{ }\left(  p\in\mathbb{Z},n\in\mathbb{N}\right)  ,
\]
is the $n$th partial sum of $\zeta\left(  p\right)  $ and is called the $n$th
generalized harmonic number for $p>1$. In particular, $H_{0}^{\left(
p\right)  }=0$ and $H_{n}^{\left(  1\right)  }=H_{n}$, the $n$th harmonic
number. When $p\leq0$ it is called sum of powers of integers.

One of the most important issues here is to write Euler-type sums as
combinations of the Riemann zeta function as in (\ref{ES}). This problem has
remained important for various Euler-type sums from the era of Euler to the
present day. It's shown by Euler himself that, the cases of $p=1$, $p=q$,
$p+q$ odd, and for special pairs $(p,q)\in\{(2,4),(4,2)\}$, the sums of the
form (\ref{g1}) have evaluations in terms of the Riemann zeta function (see
\cite{BB, CB, LE,Ni}). There is a very comprehensive literature on this
subject, both theoretical and numerical (\cite{A, Borwein, Bo,Br,CB,CS,DB,
DiMeCe, MD, S,Xu1,Xu, XYS,XZZ, YW, WL}). One of these results; the Euler
identity (\ref{ES}) was further extended in the works of Borwein et al.
\cite{Borwein} and Huard et al. \cite{HuWY}. For odd weight $N\geq3$ and
$p=1,2,\ldots,N-2,$ we have \cite[Theorem 1]{HuWY} (or \cite[p. 278]%
{Borwein})
\begin{align}
\zeta_{H^{\left(  p\right)  }}\left(  N-p\right)   &  =\left(  -1\right)
^{p}\sum_{j=0}^{\left[  \left(  N-p-1\right)  /2\right]  }\binom{N-2j-1}%
{p-1}\zeta\left(  N-2j\right)  \zeta\left(  2j\right) \label{GES}\\
&  +\left(  -1\right)  ^{p}\sum_{j=0}^{\left[  p/2\right]  }\binom
{N-2j-1}{N-p-1}\zeta\left(  N-2j\right)  \zeta\left(  2j\right)  -\zeta\left(
0\right)  \zeta\left(  N\right)  .\nonumber
\end{align}
Moreover, these so called "linear Euler sums" satisfy a simple reflection
formula%
\begin{equation}
\zeta_{H^{\left(  p\right)  }}\left(  r\right)  +\zeta_{H^{\left(  r\right)
}}\left(  p\right)  =\zeta\left(  p+r\right)  +\zeta\left(  p\right)
\zeta\left(  r\right)  . \label{rp}%
\end{equation}

Considering nested partial sums of the harmonic numbers, Conway and Guy
\cite{CG} introduced hyperharmonic numbers for an integer $r>1$ as%
\[
h_{n}^{(r)}=\sum_{k=1}^{n}h_{k}^{(r-1)},\text{ }n\in\mathbb{N},
\]
with $h_{n}^{(0)}=1/n,$ $h_{n}^{(1)}=H_{n}$ and $h_{0}^{(r)}=0.$ Hyperharmonic
numbers are also important because they build a step in the transition to the
multiple zeta functions (see \cite{KK, Xu}). Dil and Boyadzhiev \cite{DB}
extended the Euler's identity (\ref{ES}) to the Euler sums of the
hyperharmonic numbers:%
\begin{equation}
\zeta_{h^{\left(  q\right)  }}\left(  r\right)  =\sum_{n=1}^{\infty}%
\frac{h_{n}^{\left(  q\right)  }}{n^{r}},\text{ }(r>q) ,\label{g2}%
\end{equation}
as
\begin{align}
\zeta_{h^{\left(  q\right)  }}\left(  r\right)   & =\frac{1}{\left(
q-1\right)  !}\sum_{k=1}^{q}%
\genfrac{[}{]}{0pt}{}{q}{k}%
\label{HES}\\
& \times\left\{  \zeta_{H}\left(  r-k+1\right)  -H_{q-1}\zeta\left(
r-k+1\right)  +\sum_{j=1}^{q-1}\mu\left(  r-k+1,j\right)  \right\}  ,
\nonumber
\end{align}
where $%
\genfrac{[}{]}{0pt}{}{q}{k}%
$ is the Stirling number of the first kind and%
\begin{equation}
\mu\left(  r,j\right)  =\sum_{n=1}^{\infty}\frac{1}{n^{r}\left(  n+j\right)
}=\sum_{k=1}^{r-1}\frac{\left(  -1\right)  ^{k-1}}{j^{k}}\zeta\left(
r+1-k\right)  +\left(  -1\right)  ^{r-1}\frac{H_{j}}{j^{r}}. \label{mu}%
\end{equation}
Formula (\ref{HES}) was the general form of the results obtained for some
special values of $q$ and $r$ in the study of \cite{MD}.

Studies on evaluating Euler sums (\ref{g1}) and (\ref{g2}) in terms of Riemann
zeta values $\zeta\left(  m\right)  $ have motivated researchers to find
representations harmonic number series of the forms%
\[
\sum_{n=1}^{\infty}\frac{H_{n}^{\left(  p\right)  }}{\left(  n+m\right)
^{r}\dbinom{n+m+l}{l}},\text{ }\sum_{n=1}^{\infty}\frac{h_{n}^{\left(
q\right)  }}{n\dbinom{n+q}{q}}.
\]
It has been shown that some families of these type of series can be evaluated
in terms of Euler sums and Riemann zeta values (see for example for $m=0,$
$p=1,$ $r\in\left\{  0,1\right\}  $ \cite{S2,S4,SC}, for $m=r=0$ \cite{S3},
for $m=0$ \cite{S2,XZZ}$,$ for $m>0,$ $r=1,$ $p\in\left\{  1,2\right\}  $
\cite{S5,SS} and for the series involving hyperharmonic numbers \cite{DB}). We
would like to emphasize that in some studies these type of series have been
expressed in terms of hypergeometric series \cite{Choi,CS,MD,S5,S,SS}.

In this work we mainly concentrate on generalized hyperharmonic numbers
defined as (see \cite{DiMeCe})%
\begin{equation}
H_{n}^{\left(  p,r\right)  }=\sum_{k=1}^{n}H_{k}^{\left(  p,r-1\right)
},\text{ }\left(  p\in\mathbb{Z},r\in\mathbb{N}\right)  ,\label{0}%
\end{equation}
with $H_{n}^{\left(  p,0\right)  }=1/n^{p}$. These are a unified extension of
generalized harmonic numbers and hyperharmonic numbers:%
\[
H_{n}^{\left(  p,1\right)  }=H_{n}^{\left(  p\right)  }\text{ and }%
H_{n}^{\left(  1,r\right)  }=h_{n}^{(r)}.
\]

The main objective of this study is the evaluation of Euler sums of
generalized hyperharmonic numbers%
\[
\zeta_{H^{\left(  p,q\right)  }}\left(  r\right)  =\sum\limits_{n=1}^{\infty
}\dfrac{H_{n}^{\left(  p,q\right)  }}{n^{r}}.
\]
A step towards the solution of this problem is taken in \cite{DiMeCe}.
However, the recurrence used by the authors did not return a closed formula.
Without an available closed formula, they listed only the following few
special cases%
\begin{align*}
\zeta_{H^{\left(  p,2\right)  }}\left(  r\right)   &  =\zeta_{H^{\left(
r-1\right)  }}\left(  p\right)  -\zeta_{H^{\left(  r\right)  }}\left(
p-1\right)  +\zeta_{H^{\left(  r\right)  }}\left(  p\right)  ,\\
2\zeta_{H^{\left(  p,3\right)  }}\left(  r\right)   &  =2\zeta_{H^{\left(
r\right)  }}\left(  p\right)  +3\zeta_{H^{\left(  r-1\right)  }}\left(
p\right)  +\zeta_{H^{\left(  r-2\right)  }}\left(  p\right)  -3\zeta
_{H^{\left(  r\right)  }}\left(  p-1\right) \\
&  \quad+\zeta_{H^{\left(  r\right)  }}\left(  p-2\right)  -2\zeta_{H^{\left(
r-1\right)  }}\left(  p-1\right)  .
\end{align*}
Later, G\"{o}ral and Sertba\c{s} \cite{GS} showed that the Euler sums of
generalized hyperharmonic numbers can be evaluated in terms of the Euler sums
of generalized harmonic numbers and special values of the Riemann zeta
function. However, their method does not determine the coefficients
explicitly. This gap is filled in this study. The following recurrence
relation for $H_{n}^{\left(  p,q\right)  }$ depending on the index $q$,%
\[
\left(  q-1\right)  H_{n}^{\left(  p,q\right)  }=\left(  n+q-1\right)
H_{n}^{\left(  p,q-1\right)  }-H_{n}^{\left(  p-1,q-1\right)  }%
\]
is obtained. Thanks to this recurrence relation, it is managed to obtain a
closed formula for $H_{n}^{\left(  p,q\right)  }$ in terms of $H_{n}^{\left(
p\right)  }$ in Theorem \ref{teo1}. This enables the evaluation of Euler sums
of generalized hyperharmonic numbers in terms of the Euler sums of generalized
harmonic numbers as
\[
\zeta_{H^{\left(  p,q+1\right)  }}\left(  r\right)  =\frac{1}{q!}\sum
_{m=0}^{q}\sum_{k=0}^{m}\left(  -1\right)  ^{k}%
\genfrac{[}{]}{0pt}{}{q+1}{m+1}%
\binom{m}{k}\zeta_{H^{\left(  p-k\right)  }}\left(  r+k-m\right)  .
\]
A demonstration of this formula is the following example:%
\begin{align*}
2\zeta_{H^{\left(  6,4+1\right)  }}\left(  6\right)   &  =-1925\zeta\left(
11\right)  +\left(  175\pi^{2}-\frac{905}{4}-\frac{3937\pi^{8}}{544\,320}%
\right)  \zeta\left(  9\right) \\
&  +\left(  \frac{245\pi^{2}}{12}+\frac{35\pi^{4}}{18}+\frac{31\pi^{10}%
}{46\,656}\right)  \zeta\left(  7\right)  +\left(  \frac{\pi^{4}}{4}%
+\frac{5\pi^{6}}{1134}+\frac{31\pi^{12}}{6123\,600}\right)  \zeta\left(
5\right) \\
&  -\frac{35}{12}\zeta^{2}\left(  5\right)  -\frac{1}{3}\zeta\left(  3\right)
\zeta\left(  5\right)  +\frac{\pi^{6}}{1134}\zeta\left(  3\right)  +\frac
{\pi^{10}}{29\,160}+\frac{1406\pi^{12}}{638\,512\,875}.
\end{align*}
In addition, a counterpart of the reflection formula (\ref{rp}) is obtained in
the following form:
\[
\zeta_{H^{\left(  p,q+1\right)  }}\left(  r\right)  +\zeta_{H^{\left(
r,q+1\right)  }}\left(  p\right)  .
\]
Section two completes with this formula which serves to calculate sums similar
to the foregoing example with less computational cost.

In the last section we further extend our results. In this direction we
establish new and more general identities for the series whose terms are
generalizations of harmonic numbers and reciprocal binomial coefficients. For
instance,%
\[
\sum_{n=1}^{\infty}\frac{H_{n}^{\left(  p,q\right)  }}{\left(  n+m\right)
\binom{n+m+l}{l}}%
\]
is evaluated in terms of Riemann zeta values. This leads to several new
evaluation formulas for particular series involving generalized harmonic and
hyperharmonic numbers. We point out special cases of these formulas which
match with several known results in the literature.

\section{Euler sums of generalized hyperharmonic numbers}

In this section we present an evaluation formula for Euler sums $\zeta
_{H^{\left(  p,q\right)  }}\left(  r\right)  $ under certain conditions. To
state and prove our result we need some preliminaries.

Firstly, recall the polylogarithm defined by
\[
Li_{p}\left(  t\right)  =\sum_{k=1}^{\infty}\frac{t^{k}}{k^{p}},\text{
}(\left\vert t\right\vert \leq1\text{ if }p>1,\text{ and }\left\vert
t\right\vert <1\text{ if }p\leq1)\text{.}%
\]
The generating function of the numbers $H_{n}^{\left(  p,q\right)  }$ in terms
of the polylogarithm is \cite{DiMeCe}
\begin{equation}
\sum_{n=0}^{\infty}H_{n}^{\left(  p,q\right)  }t^{n}=\frac{Li_{p}\left(
t\right)  }{\left(  1-t\right)  ^{q}},\text{ }\left\vert t\right\vert
<1,\text{ }p,q\in\mathbb{Z}. \label{ghhgf}%
\end{equation}

Our first result presents the following reduction formula for $H_{n}^{\left(
p,q\right)  }$.

\begin{lemma}
Let $p$ and $q$ be integers with $q\geq1.$ Reduction relation for
$H_{n}^{\left(  p,q\right)  }$ in the index $q$\ is
\begin{equation}
\left(  q-1\right)  H_{n}^{\left(  p,q\right)  }=\left(  n+q-1\right)
H_{n}^{\left(  p,q-1\right)  }-H_{n}^{\left(  p-1,q-1\right)  }. \label{red}%
\end{equation}

\end{lemma}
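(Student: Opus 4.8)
The plan is to work entirely with the generating function (\ref{ghhgf}), since it packages the mixed dependence on both indices $p$ and $q$ into a single analytic object. Write $F_{p,q}(t)=\sum_{n\ge 0}H_{n}^{(p,q)}t^{n}=Li_{p}(t)/(1-t)^{q}$. The operator that manufactures the factor $n$ in front of a coefficient is $t\,\frac{d}{dt}$, so the first step is to differentiate $F_{p,q-1}$ and multiply by $t$. The only nontrivial analytic ingredient is the behaviour of the polylogarithm under differentiation: from its defining series one reads off $Li_{p}^{\prime}(t)=\tfrac{1}{t}Li_{p-1}(t)$, i.e.\ $t\,Li_{p}^{\prime}(t)=Li_{p-1}(t)$, which lowers the index $p$ by one.

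Carrying out the differentiation gives
\[
t\frac{d}{dt}F_{p,q-1}(t)=\frac{t\,Li_{p}^{\prime}(t)}{(1-t)^{q-1}}+\frac{(q-1)\,t\,Li_{p}(t)}{(1-t)^{q}}=F_{p-1,q-1}(t)+(q-1)\,t\,F_{p,q}(t),
\]
where the two terms on the right come respectively from differentiating the numerator (using $t\,Li_{p}^{\prime}=Li_{p-1}$) and the denominator. Comparing coefficients of $t^{n}$ on both sides, and noting that the extra factor $t$ in the last term produces an index shift, yields
\[
n\,H_{n}^{(p,q-1)}=H_{n}^{(p-1,q-1)}+(q-1)\,H_{n-1}^{(p,q)}.
\]

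To match the claimed form I must convert $H_{n-1}^{(p,q)}$ into $H_{n}^{(p,q)}$, and this is exactly what the defining relation (\ref{0}) supplies: since $H_{n}^{(p,q)}=\sum_{k=1}^{n}H_{k}^{(p,q-1)}$, consecutive values satisfy $H_{n-1}^{(p,q)}=H_{n}^{(p,q)}-H_{n}^{(p,q-1)}$. Substituting this and collecting the $H_{n}^{(p,q-1)}$ terms gives $(n+q-1)H_{n}^{(p,q-1)}=H_{n}^{(p-1,q-1)}+(q-1)H_{n}^{(p,q)}$, which rearranges to (\ref{red}).

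The argument is essentially mechanical; the one place requiring care is the index bookkeeping, namely tracking the shift produced by the leading factor $t$ in $(q-1)t\,F_{p,q}(t)$ and then absorbing the resulting $H_{n-1}^{(p,q)}$ through the first-difference identity coming from (\ref{0}). I expect no genuine obstacle beyond this bookkeeping, and the identity holds for every $n\ge 0$ once the boundary values $H_{0}^{(p,q)}=0$ (read off from $Li_{p}(0)=0$ in (\ref{ghhgf})) are checked directly against (\ref{red}).
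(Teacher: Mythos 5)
Your proof is correct. The product-rule computation $t\frac{d}{dt}F_{p,q-1}(t)=F_{p-1,q-1}(t)+(q-1)\,t\,F_{p,q}(t)$ is valid (the key point being $t\,Li_{p}^{\prime}(t)=Li_{p-1}(t)$, which holds for every integer $p$ inside $|t|<1$), coefficient extraction gives $nH_{n}^{(p,q-1)}=H_{n}^{(p-1,q-1)}+(q-1)H_{n-1}^{(p,q)}$, and the first-difference relation $H_{n-1}^{(p,q)}=H_{n}^{(p,q)}-H_{n}^{(p,q-1)}$, which is exactly (\ref{0}) for $q\geq1$, turns this into (\ref{red}).

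Your route differs from the paper's in execution, and is in fact more direct. The paper introduces an auxiliary polynomial $H_{n}^{(p,q)}(z)=\sum_{k=0}^{n}H_{k}^{(p,q)}z^{k}$ with the bivariate generating function (\ref{Hgf}), differentiates in the auxiliary variable $z$ to obtain (\ref{tur1}), and then needs a separate summation-by-parts manipulation, applying (\ref{0}) twice, to express $\left.  \frac{d}{dz}H_{n}^{\left(  p,q-2\right)  }\left(  z\right)  \right\vert _{z=1}$ in terms of the numbers themselves (equation (\ref{tur2})); the lemma follows by combining the two. You instead work with the univariate generating function (\ref{ghhgf}) directly, differentiate in $t$, compare coefficients, and invoke (\ref{0}) only once as a plain first difference. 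The underlying engine is the same in both arguments: differentiation lowers $p$ through the polylogarithm identity and raises $q$ through the denominator, while the defining recurrence absorbs the resulting index shift. But your version dispenses with the auxiliary polynomial, the extra variable, and the evaluation at $z=1$ entirely, so it is shorter and more transparent, at no loss of generality.
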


\begin{proof}
We define the polynomial $H_{n}^{\left(  p,q\right)  }\left(  z\right)  $ as%
\[
H_{n}^{\left(  p,q\right)  }\left(  z\right)  =\sum_{k=0}^{n}H_{k}^{\left(
p,q\right)  }z^{k}.
\]
Considering (\ref{ghhgf}), we obtain the ordinary generating function of
$H_{n}^{\left(  p,q\right)  }\left(  z\right)  $ as%
\begin{equation}
\sum_{n=0}^{\infty}H_{n}^{\left(  p,q\right)  }\left(  z\right)  t^{n}%
=\frac{Li_{p}\left(  zt\right)  }{\left(  1-t\right)  \left(  1-zt\right)
^{q}}. \label{Hgf}%
\end{equation}
From (\ref{Hgf}), it can be seen that
\begin{equation}
z\frac{d}{dz}H_{n}^{\left(  p,q\right)  }\left(  z\right)  =H_{n}^{\left(
p-1,q\right)  }\left(  z\right)  +qzH_{n-1}^{\left(  p,q+1\right)  }\left(
z\right)  . \label{tur1}%
\end{equation}
On the other hand, we utilize (\ref{0}) twice to find that
\begin{align*}
H_{n}^{\left(  p,q\right)  }  &  =\sum_{k=1}^{n}H_{k}^{\left(  p,q-1\right)
}=\sum_{k=1}^{n}\sum_{j=1}^{k}H_{j}^{\left(  p,q-2\right)  }\\
&  =\left(  n+1\right)  H_{n}^{\left(  p,q-1\right)  }-\sum_{j=1}^{n}%
jH_{j}^{\left(  p,q-2\right)  }\\
&  =\left(  n+1\right)  H_{n}^{\left(  p,q-1\right)  }-\left.  \frac{d}%
{dz}H_{n}^{\left(  p,q-2\right)  }\left(  z\right)  \right\vert _{z=1},
\end{align*}
or equivalently%
\begin{equation}
\left.  \frac{d}{dz}H_{n}^{\left(  p,q-2\right)  }\left(  z\right)
\right\vert _{z=1}=\left(  n+1\right)  H_{n}^{\left(  p,q-1\right)  }%
-H_{n}^{\left(  p,q\right)  }\overset{\text{(\ref{0})}}{=}nH_{n}^{\left(
p,q-1\right)  }-H_{n-1}^{\left(  p,q\right)  }. \label{tur2}%
\end{equation}
Therefore, (\ref{tur1}) and (\ref{tur2}) yield the desired formula.
\end{proof}

The objective here is to express $H_{n}^{\left(  p,q\right)  }$ in terms of
$H_{n}^{\left(  p\right)  }.$ In \cite{DiMeCe} this relation is listed for at
most $q=4$ due to the complexity of the process. However, the next result
provides a general solution to this problem where the numbers $H_{n}^{\left(
p,q\right)  }$ are expressed in terms of the numbers $H_{n}^{\left(  p\right)
}$ and $%
\genfrac{[}{]}{0pt}{}{q}{j}%
_{r}$. Here $%
\genfrac{[}{]}{0pt}{}{q}{j}%
_{r}$ denotes the $r$-Stirling number of the first kind defined by the
"horizontal" generating function \cite{Broder,Ca,RM}%
\begin{equation}
\left(  x+r\right)  \left(  x+r+1\right)  \cdots\left(  x+r+q-1\right)
=\sum\limits_{j=0}^{q}%
\genfrac{[}{]}{0pt}{}{q}{j}%
_{r}x^{j}. \label{rS1}%
\end{equation}
The essence of the theorem's proof is based on the relationship between
$r$-Stirling numbers and symmetric polynomials. The $k$th elementary symmetric
polynomial $e_{k}\left(  X_{1},\ldots,X_{q}\right)  $ in variables
$X_{1},\ldots,X_{q}$ is defined by (see for example \cite{Ma})
\begin{align*}
e_{0}\left(  X_{1},\ldots,X_{q}\right)   &  =1,\\
e_{k}\left(  X_{1},\ldots,X_{q}\right)   &  =\sum_{1\leq j_{1}<j_{2}%
<\cdots<j_{k}\leq q}\prod\limits_{i=1}^{k}X_{j_{i}},\text{ }1\leq k\leq q,\\
e_{k}\left(  X_{1},\ldots,X_{q}\right)   &  =0,\text{ }k>q,
\end{align*}
and possess the identity
\begin{equation}
\prod\limits_{j=1}^{q}\left(  x-X_{j}\right)  =\sum\limits_{j=0}^{q}\left(
-1\right)  ^{r-j}e_{q-j}\left(  X_{1},\ldots,X_{q}\right)  x^{j}. \label{**}%
\end{equation}
The comparison of (\ref{rS1}) with (\ref{**}) obviously leads to the following
relationship \cite[Theorem 4.1]{RM}
\begin{equation}
e_{q-j}\left(  n+1,n+2,\ldots,n+q\right)  =%
\genfrac{[}{]}{0pt}{}{q}{j}%
_{n+1}. \label{2}%
\end{equation}

\begin{theorem}
\label{teo1}Let $p$ and $q$ be integers with $q\geq0.$ Then,%
\begin{equation}
q!H_{n}^{\left(  p,q+1\right)  }=\sum_{k=0}^{q}\left(  -1\right)  ^{k}%
\genfrac{[}{]}{0pt}{}{q}{k}%
_{n+1}H_{n}^{\left(  p-k\right)  }. \label{4.2}%
\end{equation}

\end{theorem}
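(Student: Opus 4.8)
The plan is to prove (\ref{4.2}) by induction on $q$, using the reduction relation (\ref{red}) as the engine and the horizontal generating function (\ref{rS1}) of the $r$-Stirling numbers (equivalently the symmetric-polynomial dictionary (\ref{2})) to collapse the resulting coefficients. First I would rewrite (\ref{red}) after replacing $q$ by $q+1$, namely
\[
qH_{n}^{\left(p,q+1\right)}=\left(n+q\right)H_{n}^{\left(p,q\right)}-H_{n}^{\left(p-1,q\right)},
\]
which is the form that lowers the $q$-index by one at the cost of lowering the $p$-index in the correction term. For the base case $q=0$ the asserted identity reads $H_{n}^{\left(p,1\right)}=\genfrac{[}{]}{0pt}{}{0}{0}_{n+1}H_{n}^{\left(p\right)}$, and this holds because $H_{n}^{\left(p,1\right)}=H_{n}^{\left(p\right)}$ while the empty product in (\ref{rS1}) forces $\genfrac{[}{]}{0pt}{}{0}{0}_{n+1}=1$.

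For the inductive step I assume (\ref{4.2}) holds at level $q-1$ for \emph{every} value of the first index; in particular I may apply it both to $H_{n}^{\left(p,q\right)}$ and to $H_{n}^{\left(p-1,q\right)}$. Multiplying the re-indexed reduction relation by $\left(q-1\right)!$ gives
\[
q!H_{n}^{\left(p,q+1\right)}=\left(n+q\right)\left(q-1\right)!H_{n}^{\left(p,q\right)}-\left(q-1\right)!H_{n}^{\left(p-1,q\right)}.
\]
Substituting the induction hypothesis into both terms, shifting $k\mapsto k-1$ in the sum coming from $H_{n}^{\left(p-1,q\right)}$, and collecting the coefficient of $\left(-1\right)^{k}H_{n}^{\left(p-k\right)}$ produces
\[
q!H_{n}^{\left(p,q+1\right)}=\sum_{k=0}^{q}\left(-1\right)^{k}\left[\left(n+q\right)\genfrac{[}{]}{0pt}{}{q-1}{k}_{n+1}+\genfrac{[}{]}{0pt}{}{q-1}{k-1}_{n+1}\right]H_{n}^{\left(p-k\right)},
\]
where the endpoint terms $k=0$ and $k=q$ are consistent with the conventions $\genfrac{[}{]}{0pt}{}{q-1}{-1}_{n+1}=\genfrac{[}{]}{0pt}{}{q-1}{q}_{n+1}=0$.

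It then remains to identify the bracketed coefficient with $\genfrac{[}{]}{0pt}{}{q}{k}_{n+1}$, i.e. to establish the recurrence
\[
\genfrac{[}{]}{0pt}{}{q}{k}_{n+1}=\left(n+q\right)\genfrac{[}{]}{0pt}{}{q-1}{k}_{n+1}+\genfrac{[}{]}{0pt}{}{q-1}{k-1}_{n+1},
\]
which I would obtain by multiplying the order-$\left(q-1\right)$ instance of (\ref{rS1}) with $r=n+1$ by the single extra factor $x+n+q$ and comparing coefficients of $x^{k}$; equivalently, via (\ref{2}) this is exactly the elementary-symmetric recurrence $e_{m}\left(X_{1},\ldots,X_{q}\right)=e_{m}\left(X_{1},\ldots,X_{q-1}\right)+X_{q}e_{m-1}\left(X_{1},\ldots,X_{q-1}\right)$ evaluated at $X_{q}=n+q$, $m=q-k$. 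This is where the "symmetric-polynomial essence" enters, and it is the main point requiring care: the index bookkeeping in the two sums and the correct pairing of the shifted term with the factor $x+n+q$ (rather than, say, $x+n+1$) must match the way the outermost variable is appended in (\ref{rS1}). Once this recurrence is in hand the bracket collapses to $\genfrac{[}{]}{0pt}{}{q}{k}_{n+1}$, the inductive step closes, and (\ref{4.2}) follows for all $q\geq0$.
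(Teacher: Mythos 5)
Your proof is correct. Where it differs from the paper is in the shape of the induction. The paper keeps $q$ fixed and inducts on an auxiliary unrolling depth $r$: its intermediate claim (\ref{*}) expresses $\left(q+1-r\right)\cdots\left(q-1\right)q\,H_{n}^{\left(p,q+1\right)}$ in terms of the mixed numbers $H_{n}^{\left(p-k,q+1-r\right)}$ with elementary symmetric polynomials $e_{r-k}\left(n+q-r+1,\ldots,n+q\right)$ as coefficients, and the theorem is the case $r=q$ translated into $r$-Stirling numbers via (\ref{2}). You instead induct on $q$ itself, taking (\ref{4.2}) at level $q-1$ as the hypothesis (quantified over all first indices, which you rightly flag as necessary since (\ref{red}) lowers $p$ in the correction term); all the coefficient bookkeeping is then absorbed into the single Pascal-type recurrence
\[
\genfrac{[}{]}{0pt}{}{q}{k}_{n+1}=\left(n+q\right)\genfrac{[}{]}{0pt}{}{q-1}{k}_{n+1}+\genfrac{[}{]}{0pt}{}{q-1}{k-1}_{n+1},
\]
which, as you note, is just (\ref{rS1}) at order $q-1$ multiplied by the extra factor $x+n+q$, i.e.\ the elementary-symmetric recurrence appending the \emph{top} variable $n+q$ — whereas the paper's inductive step appends the \emph{bottom} variable $n+q-r$ to its growing variable set. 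Both arguments run on the same two ingredients, the reduction relation (\ref{red}) and the add-one-variable recurrence for elementary symmetric functions, so the underlying computation is the same; yours is shorter and avoids any mixed-index intermediate statement, while the paper's yields as a by-product the partial-unrolling formulas (\ref{*}), valid for every $r\leq q$, which make explicit how the symmetric-polynomial coefficients accumulate one factor at a time.
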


\begin{proof}
We employ (\ref{red}) on the right-hand side of%
\[
\left(  q-1\right)  qH_{n}^{\left(  p,q+1\right)  }=\left(  n+q\right)
\left(  q-1\right)  H_{n}^{\left(  p,q\right)  }-\left(  q-1\right)
H_{n}^{\left(  p-1,q\right)  },
\]
and see that%
\begin{align*}
\left(  q-1\right)  qH_{n}^{\left(  p,q+1\right)  }  &  =H_{n}^{\left(
p,q-1\right)  }\left\{  \left(  n+q\right)  \left(  n+q-1\right)  \right\} \\
&  \quad-H_{n}^{\left(  p-1,q-1\right)  }\left\{  \left(  n+q\right)  +\left(
n+q-1\right)  \right\}  +H_{n}^{\left(  p-2,q-1\right)  }\\
&  =\sum_{k=0}^{2}\left(  -1\right)  ^{k}e_{2-k}\left(  n+q-1,n+q\right)
H_{n}^{\left(  p-k,q+1-2\right)  }.
\end{align*}
These initial steps suggest that the following equality should hold:
\begin{align}
& \left(  q+1-r\right)  \left(  q+1-\left(  r-1\right)  \right)  \cdots\left(
q-1\right)  qH_{n}^{\left(  p,q+1\right)  }\label{*}\\
& =\sum_{k=0}^{r}\left(  -1\right)  ^{k}e_{r-k}\left(  n+q-\left(  r-1\right)
,n+q-\left(  r-2\right)  ,\ldots,n+q\right)  H_{n}^{\left(  p-k,q+1-r\right)
}. \nonumber
\end{align}
To prove this by induction we show that it is also true for $r+1\leq q.$ We
multiply (\ref{*}) by $\left(  q-r\right)  $ and then use (\ref{red}). Hence
we find that
\begin{align*}
&  \left(  q-r\right)  \left(  q+1-r\right)  \left(  q+1-\left(  r-1\right)
\right)  \cdots\left(  q-1\right)  qH_{n}^{\left(  p,q+1\right)  }\\
&  \ =\sum_{k=0}^{r}\left(  -1\right)  ^{k}e_{r-k}\left(  n+q-\left(
r-1\right)  ,\ldots,n+q\right)  \left(  n+q-r\right)  H_{n}^{\left(
p-k,q-r\right)  }\\
&  \quad+\sum_{k=0}^{r}\left(  -1\right)  ^{k+1}e_{r-k}\left(  n+q-\left(
r-1\right)  ,\ldots,n+q\right)  H_{n}^{\left(  p-k-1,q-r\right)  }\\
&  \ =e_{r}\left(  n+q-\left(  r-1\right)  ,\ldots,n+q\right)  \left(
n+q-r\right)  H_{n}^{\left(  p,q-r\right)  }\\
&  \quad+\sum_{k=1}^{r}\left(  -1\right)  ^{k}H_{n}^{\left(  p-k,q-r\right)
}\left\{  \left(  n+q-r\right)  e_{r-k}\left(  n+q-\left(  r-1\right)
,\ldots,n+q\right)  \right. \\
&  \qquad\qquad\qquad\qquad\qquad\qquad\qquad\left.  +e_{r+1-k}\left(
n+q-\left(  r-1\right)  ,\ldots,n+q\right)  \right\} \\
&  \quad+\left(  -1\right)  ^{r+1}e_{0}\left(  n+q-r,\ldots,n+q\right)
H_{n}^{\left(  p-\left(  r+1\right)  ,q-r\right)  }\\
&  \ =\sum_{k=0}^{r+1}\left(  -1\right)  ^{k}e_{r+1-k}\left(  n+q-r,\ldots
,n+q\right)  H_{n}^{\left(  p-k,q+1-\left(  r+1\right)  \right)  }.
\end{align*}
The case $r=q$ in (\ref{*}) gives
\[
q!H_{n}^{\left(  p,q+1\right)  }=\sum_{k=0}^{q}\left(  -1\right)  ^{k}%
e_{q-k}\left(  n+1,\ldots,n+q\right)  H_{n}^{\left(  p-k\right)  },
\]
which combines with (\ref{2}) to give the statement.
\end{proof}

Now, we are ready to state and prove our evaluation formula for $\zeta
_{H^{\left(  p,q\right)  }}\left(  r\right)  .$ Thanks to this formula the
evaluation of Euler sums of generalized hyperharmonic numbers reduces to the
evaluation of Euler sums of generalized harmonic numbers.

\begin{theorem}
\label{GHHES}For $p,q\geq1$ and $r>q+1,$ we have%
\[
\zeta_{H^{\left(  p,q+1\right)  }}\left(  r\right)  =\frac{1}{q!}\sum
_{m=0}^{q}\sum_{k=0}^{m}\left(  -1\right)  ^{k}%
\genfrac{[}{]}{0pt}{}{q+1}{m+1}%
\binom{m}{k}\zeta_{H^{\left(  p-k\right)  }}\left(  r+k-m\right)  .
\]

\end{theorem}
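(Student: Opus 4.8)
The plan is to start from the closed formula for $H_{n}^{\left(p,q+1\right)}$ provided by Theorem \ref{teo1}, namely
\[
q!H_{n}^{\left(p,q+1\right)}=\sum_{k=0}^{q}\left(-1\right)^{k}\genfrac{[}{]}{0pt}{}{q}{k}_{n+1}H_{n}^{\left(p-k\right)},
\]
divide both sides by $n^{r}$, and sum over $n\geq1$. The obstacle is that the $r$-Stirling coefficient $\genfrac{[}{]}{0pt}{}{q}{k}_{n+1}$ depends on the summation index $n$, so the resulting series is not immediately a linear Euler sum of the desired type. My first task would therefore be to disentangle this $n$-dependence.

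To do this, I would use the relation $\genfrac{[}{]}{0pt}{}{q}{k}_{n+1}=e_{q-k}\left(n+1,n+2,\ldots,n+q\right)$ from equation (\ref{2}). Each elementary symmetric polynomial $e_{q-k}\left(n+1,\ldots,n+q\right)$ is itself a polynomial in $n$ of degree $q-k$; equivalently, by the horizontal generating function (\ref{rS1}), the full generating object $\sum_{k}\genfrac{[}{]}{0pt}{}{q}{k}_{n+1}x^{k}$ equals the falling-type product $\left(x+n+1\right)\cdots\left(x+n+q\right)$. The key step is to re-expand this product (or the individual $e_{q-k}$) as a polynomial in $n$ with coefficients that are ordinary (unshifted) Stirling numbers of the first kind $\genfrac{[}{]}{0pt}{}{q+1}{m+1}$. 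Concretely, I expect the identity
\[
\sum_{k=0}^{q}\left(-1\right)^{k}\genfrac{[}{]}{0pt}{}{q}{k}_{n+1}H_{n}^{\left(p-k\right)}=\sum_{m=0}^{q}\sum_{k=0}^{m}\left(-1\right)^{k}\genfrac{[}{]}{0pt}{}{q+1}{m+1}\binom{m}{k}n^{m-k}H_{n}^{\left(p-k\right)},
\]
which converts the $r$-Stirling weights into a bilinear combination of an ordinary Stirling number $\genfrac{[}{]}{0pt}{}{q+1}{m+1}$ and a binomial coefficient $\binom{m}{k}$, with an explicit power $n^{m-k}$ attached. Establishing this polynomial re-expansion is the main algebraic hurdle; it should follow by comparing coefficients after substituting $n+j=\left(n\right)+j$ and applying the standard relation between $r$-Stirling numbers and Stirling numbers (for instance $\sum_{k}\genfrac{[}{]}{0pt}{}{q}{k}_{1}x^{k}=\sum_{m}\genfrac{[}{]}{0pt}{}{q+1}{m+1}x^{m}$, i.e. the $r=1$ case of (\ref{rS1})), together with the binomial theorem applied to each shifted factor.

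Once the $n$-dependence has been reduced to explicit monomials $n^{m-k}$, I would divide by $n^{r}$ and sum. Since $n^{m-k}H_{n}^{\left(p-k\right)}/n^{r}=H_{n}^{\left(p-k\right)}/n^{r-\left(m-k\right)}=H_{n}^{\left(p-k\right)}/n^{r+k-m}$, each inner term becomes precisely $\zeta_{H^{\left(p-k\right)}}\left(r+k-m\right)$ by the definition (\ref{g1}) of the linear Euler sum. Interchanging the finite double sum over $m,k$ with the infinite sum over $n$ is justified by absolute convergence, which is guaranteed by the hypothesis $r>q+1$: the largest power of $n$ in the numerator is $m-k\leq q$, so the slowest-decaying term behaves like $H_{n}^{(p-k)}/n^{r-q}$ with $r-q>1$, ensuring convergence of every $\zeta_{H^{\left(p-k\right)}}\left(r+k-m\right)$ since $r+k-m\geq r-q\geq2$. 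Collecting the constant $1/q!$ from Theorem \ref{teo1} then yields
\[
\zeta_{H^{\left(p,q+1\right)}}\left(r\right)=\frac{1}{q!}\sum_{m=0}^{q}\sum_{k=0}^{m}\left(-1\right)^{k}\genfrac{[}{]}{0pt}{}{q+1}{m+1}\binom{m}{k}\zeta_{H^{\left(p-k\right)}}\left(r+k-m\right),
\]
which is the claimed evaluation.
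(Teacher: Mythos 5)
Your proposal is correct and follows essentially the same route as the paper: both start from Theorem \ref{teo1} and convert the shifted $r$-Stirling numbers $\genfrac{[}{]}{0pt}{}{q}{k}_{n+1}$ into ordinary Stirling numbers times explicit powers of $n$ via the identity $\genfrac{[}{]}{0pt}{}{q}{k}_{n+1}=\sum_{m=k}^{q}\genfrac{[}{]}{0pt}{}{q+1}{m+1}\binom{m}{k}n^{m-k}$, and then divide by $n^{r}$ and sum over $n$. The only difference is that the paper cites this conversion identity from the literature (Nyul--R\'{a}cz), whereas you sketch a self-contained derivation of it from the generating function (\ref{rS1}) with the substitution $y=x+n$, the $r=1$ case, and the binomial theorem, which is sound.
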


\begin{proof}
From (\ref{4.2}) and the following identity \cite[p. 1661]{NR}
\[%
\genfrac{[}{]}{0pt}{}{n}{k}%
_{r+1}=\sum_{m=k}^{n}%
\genfrac{[}{]}{0pt}{}{n+1}{m+1}%
\binom{m}{k}r^{m-k},
\]
we have
\begin{align*}
H_{n}^{\left(  p,q+1\right)  }  &  =\frac{1}{q!}\sum_{k=0}^{q}\left(
-1\right)  ^{k}%
\genfrac{[}{]}{0pt}{}{q}{k}%
_{n+1}H_{n}^{\left(  p-k\right)  }\\
&  =\frac{1}{q!}\sum_{k=0}^{q}\sum_{m=k}^{q}\left(  -1\right)  ^{k}%
\genfrac{[}{]}{0pt}{}{q+1}{m+1}%
\binom{m}{k}n^{m-k}H_{n}^{\left(  p-k\right)  }.
\end{align*}
Multiplying both sides with $n^{-r}$ and summing over $n$ from $1$ to $\infty
$, we deduce the desired result.
\end{proof}

As mentioned introductory the sums $\zeta_{H^{\left(  p,q\right)  }}\left(
r\right)  $ were listed up to $q=3$ in \cite{DiMeCe}. With the help of Theorem
\ref{GHHES} these sums can be evaluated for further choices of $q.$ For
instance for $q=4$ one can obtain:%
\begin{align*}
\zeta_{H^{\left(  p,4\right)  }}\left(  r\right)   &  =\zeta_{H^{\left(
p\right)  }}\left(  r\right)  +\frac{11}{6}\zeta_{H^{\left(  p\right)  }%
}\left(  r-1\right)  +\zeta_{H^{\left(  p\right)  }}\left(  r-2\right)
+\frac{1}{6}\zeta_{H^{\left(  p\right)  }}\left(  r-3\right) \\
&  \quad-\frac{11}{6}\zeta_{H^{\left(  p-1\right)  }}\left(  r\right)
-2\zeta_{H^{\left(  p-1\right)  }}\left(  r-1\right)  -\frac{1}{2}%
\zeta_{H^{\left(  p-1\right)  }}\left(  r-2\right)  +\zeta_{H^{\left(
p-2\right)  }}\left(  r\right) \\
&  \quad+\frac{1}{2}\zeta_{H^{\left(  p-2\right)  }}\left(  r-1\right)
-\frac{1}{6}\zeta_{H^{\left(  p-3\right)  }}\left(  r\right)  .
\end{align*}
Hence, with the use of some values of $\zeta_{H^{\left(  p\right)  }}\left(
r\right)  $ listed in forthcoming Remark \ref{tablo}, a few concrete
expressions of $\zeta_{H^{\left(  p,4\right)  }}\left(  r\right)  $ are:\
\begin{align*}
\bullet\text{ }\zeta_{H^{\left(  1,4\right)  }}\left(  5\right)   &
=\frac{11}{2}\zeta\left(  5\right)  -\left(  1-\frac{11}{36}\pi^{2}\right)
\zeta\left(  3\right)  -\frac{1}{2}\left(  \zeta\left(  3\right)  \right)
^{2}-\frac{11}{216}\pi^{2}-\frac{\pi^{4}}{810}+\frac{\pi^{6}}{540},\\
\bullet\text{ }\zeta_{H^{\left(  2,4\right)  }}\left(  5\right)   &
=-10\zeta\left(  7\right)  +\left(  \frac{5}{6}\pi^{2}-\frac{21}{2}\right)
\zeta\left(  5\right)  +\left(  \frac{\pi^{4}}{45}+\frac{5}{6}\pi^{2}+\frac
{5}{12}\right)  \zeta\left(  3\right) \\
&  \quad+\frac{11}{4}\left(  \zeta\left(  3\right)  \right)  ^{2}+\frac
{7\pi^{4}}{1080}-\frac{55\pi^{6}}{13608},\\
\bullet\text{ }\zeta_{H^{\left(  3,4\right)  }}\left(  5\right)   &
=\zeta_{H^{\left(  3\right)  }}\left(  5\right)  +\frac{154}{3}\zeta\left(
7\right)  +\left(  \frac{14}{3}-\frac{55}{12}\pi^{2}\right)  \zeta\left(
5\right)  -2\left(  \zeta\left(  3\right)  \right)  ^{2}\\
&  \quad-\left(  \frac{7\pi^{2}}{18}+\frac{11\pi^{4}}{270}\right)
\zeta\left(  3\right)  -\frac{\pi^{4}}{540}+\frac{\pi^{6}}{324},\\
\bullet\text{ }\zeta_{H^{\left(  4,4\right)  }}\left(  5\right)   &
=-\frac{11}{6}\zeta_{H^{\left(  3\right)  }}\left(  5\right)  -\frac{125}%
{2}\zeta\left(  9\right)  +\left(  \frac{35}{6}\pi^{2}-63\right)  \zeta\left(
7\right) \\
&  \quad+\left(  \frac{35}{6}\pi^{2}+\frac{\pi^{4}}{18}\right)  \zeta\left(
5\right)  +\frac{\pi^{4}}{30}\zeta\left(  3\right)  -\frac{\pi^{6}}%
{1944}+\frac{143\pi^{8}}{680400},\\
\bullet\text{ }\zeta_{H^{\left(  5,4\right)  }}\left(  5\right)   &
=231\zeta\left(  9\right)  +\left(  21-\frac{385}{18}\pi^{2}\right)
\zeta\left(  7\right)  -\left(  \frac{11}{60}\pi^{4}+\frac{23}{12}\pi
^{2}\right)  \zeta\left(  5\right) \\
&  \quad+\frac{1}{2}\left(  \zeta\left(  5\right)  \right)  ^{2}+\zeta\left(
3\right)  \zeta\left(  5\right)  -\frac{7\pi^{4}}{540}\zeta\left(  3\right)
-\frac{\pi^{8}}{8100}+\frac{\pi^{10}}{187110}.
\end{align*}
\textbf{\ }

The following corollary gives the reflection formula for Euler sums of
generalized hyperharmonic numbers. Combined with (\ref{GES}), this corollary
shows that $\zeta_{H^{\left(  p,q+1\right)  }}\left(  r\right)  +\zeta
_{H^{\left(  r,q+1\right)  }}\left(  p\right)  $ can be written as a
combination of Riemann zeta values. In this way, particular Euler sums of type
$\zeta_{H^{\left(  p,q\right)  }}\left(  p\right)  $ can be evaluated with
less computation.

\begin{corollary}
\label{cor1}Let $p>q+1,$ $r>q+1$ and $p+r$ be even. Then%
\begin{align*}
&  \zeta_{H^{\left(  p,q+1\right)  }}\left(  r\right)  +\zeta_{H^{\left(
r,q+1\right)  }}\left(  p\right) \\
&  =\zeta\left(  p+r\right)  +\frac{2}{q!}\sum_{\substack{m=0\\m\text{ odd}%
}}^{q}\sum_{k=0}^{m}\left(  -1\right)  ^{k}%
\genfrac{[}{]}{0pt}{}{q+1}{m+1}%
\binom{m}{k}\zeta_{H^{\left(  p-k\right)  }}\left(  r+k-m\right) \\
&  +\frac{1}{q!}\sum_{m=0}^{q}\sum_{k=0}^{m}\left(  -1\right)  ^{m+k}%
\genfrac{[}{]}{0pt}{}{q+1}{m+1}%
\binom{m}{k}\zeta\left(  p-k\right)  \zeta\left(  r+k-m\right)  .
\end{align*}

\end{corollary}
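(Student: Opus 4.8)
The plan is to start from Theorem \ref{GHHES}, applied to each of the two summands, and then reduce the resulting linear Euler sums by means of the reflection formula (\ref{rp}). Writing $c_{m,k}=\genfrac{[}{]}{0pt}{}{q+1}{m+1}\binom{m}{k}$ for brevity, Theorem \ref{GHHES} gives
\[
\zeta_{H^{(p,q+1)}}(r)=\frac{1}{q!}\sum_{m=0}^{q}\sum_{k=0}^{m}(-1)^{k}c_{m,k}\,\zeta_{H^{(p-k)}}(r+k-m),
\]
together with the same identity with $p$ and $r$ interchanged for $\zeta_{H^{(r,q+1)}}(p)$. The hypotheses $p,r>q+1$ guarantee that every upper index $p-k,\,r-k$ and every argument $r+k-m,\,p+k-m$ occurring here, and after the reflection step below, is at least $2$; hence all series converge and (\ref{rp}) applies.

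First I would leave $\zeta_{H^{(p,q+1)}}(r)$ untouched and rewrite the second sum, applying (\ref{rp}) to each term $\zeta_{H^{(r-k)}}(p+k-m)$ to replace it by $\zeta(p+r-m)+\zeta(r-k)\zeta(p+k-m)-\zeta_{H^{(p+k-m)}}(r-k)$. In the last of these three pieces I would re-index with $k\mapsto m-k$; using $\binom{m}{m-k}=\binom{m}{k}$ and $(-1)^{m-k}=(-1)^{m}(-1)^{k}$, the term $-\zeta_{H^{(p+k-m)}}(r-k)$ becomes $-(-1)^{m}(-1)^{k}c_{m,k}\,\zeta_{H^{(p-k)}}(r+k-m)$, i.e. exactly the form of the untouched first sum but carrying an extra factor $(-1)^{m}$.

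Then I would collect the contributions. Adding the first sum to this re-indexed Euler-sum piece produces the factor $1-(-1)^{m}$, which vanishes for even $m$ and equals $2$ for odd $m$, leaving precisely the odd-$m$ double sum in the statement. The $\zeta(p+r-m)$ contribution collapses because $\sum_{k=0}^{m}(-1)^{k}\binom{m}{k}=(1-1)^{m}$ is zero unless $m=0$; the surviving $m=0$ term equals $\frac{1}{q!}\genfrac{[}{]}{0pt}{}{q+1}{1}\zeta(p+r)=\zeta(p+r)$, since $\genfrac{[}{]}{0pt}{}{q+1}{1}=q!$. Finally the product contribution $\frac{1}{q!}\sum_{m}\sum_{k}(-1)^{k}c_{m,k}\zeta(r-k)\zeta(p+k-m)$ is brought to the stated shape by the same substitution $k\mapsto m-k$, which sends it to $\frac{1}{q!}\sum_{m}\sum_{k}(-1)^{m+k}c_{m,k}\zeta(p-k)\zeta(r+k-m)$. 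Assembling the three pieces yields the asserted identity.

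The main obstacle is purely one of bookkeeping: one must track signs and binomial symmetries through the substitution $k\mapsto m-k$ carefully enough to see that the reflected Euler-sum terms line up termwise with the untouched sum (same $m$, same $c_{m,k}$), so that the clean factor $1-(-1)^{m}$ emerges and only the odd indices survive. It is worth noting that the parity hypothesis $p+r$ even is not actually needed for the identity itself; it is imposed so that the surviving odd-$m$ linear sums $\zeta_{H^{(p-k)}}(r+k-m)$ all have odd weight $p+r-m$ and hence, via (\ref{GES}), reduce to Riemann zeta values, as remarked just before the statement.
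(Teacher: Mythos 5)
Your proof is correct and follows essentially the same route as the paper's: both apply Theorem \ref{GHHES} to each summand and then combine the substitution $k\mapsto m-k$ (using $\binom{m}{m-k}=\binom{m}{k}$) with the reflection formula (\ref{rp}) so that the Euler-sum terms pick up the factor $1-(-1)^{m}$ and only odd $m$ survives, while the $m=0$ term of the binomial alternating sum produces $\zeta(p+r)$ via $\genfrac{[}{]}{0pt}{}{q+1}{1}=q!$; the only (cosmetic) difference is that you reflect before re-indexing whereas the paper re-indexes first and then splits by the parity of $m$. Your closing observation is also accurate: the paper's argument likewise never uses the evenness of $p+r$ in deriving the identity itself, that hypothesis serving only to guarantee, through (\ref{GES}), that the surviving odd-$m$ linear Euler sums reduce to Riemann zeta values.
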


\begin{proof}
Let $\left(  p+r\right)  $ be even. It is obvious from Theorem \ref{GHHES}
that%
\begin{align*}
&  \zeta_{H^{\left(  p,q+1\right)  }}\left(  r\right)  +\zeta_{H^{\left(
r,q+1\right)  }}\left(  p\right) \\
&  =\frac{1}{q!}\sum_{m=0}^{q}\sum_{k=0}^{m}\left(  -1\right)  ^{k}%
\genfrac{[}{]}{0pt}{}{q+1}{m+1}%
\binom{m}{k}\left\{  \sum_{n=1}^{\infty}\frac{H_{n}^{\left(  p-k\right)  }%
}{n^{r+k-m}}+\left(  -1\right)  ^{m}\sum_{n=1}^{\infty}\frac{H_{n}^{\left(
r+k-m\right)  }}{n^{p-k}}\right\}  .
\end{align*}
We write the right-hand side as
\begin{align*}
&  \sum_{\substack{m=0\\m\text{ odd}}}^{q}\sum_{k=0}^{m}\left(  -1\right)
^{k}%
\genfrac{[}{]}{0pt}{}{q+1}{m+1}%
\binom{m}{k}\left\{  \sum_{n=1}^{\infty}\frac{H_{n}^{\left(  p-k\right)  }%
}{n^{r+k-m}}-\sum_{n=1}^{\infty}\frac{H_{n}^{\left(  r+k-m\right)  }}{n^{p-k}%
}\right\} \\
&  +\sum_{0\leq m\leq q/2}\sum_{k=0}^{2m}\left(  -1\right)  ^{k}%
\genfrac{[}{]}{0pt}{}{q+1}{2m+1}%
\binom{2m}{k}\left\{  \sum_{n=1}^{\infty}\frac{H_{n}^{\left(  p-k\right)  }%
}{n^{r+k-2m}}+\sum_{n=1}^{\infty}\frac{H_{n}^{\left(  r+k-2m\right)  }%
}{n^{p-k}}\right\}  .
\end{align*}
By the reflection formula (\ref{rp}) we have
\[
\sum_{n=1}^{\infty}\frac{H_{n}^{\left(  p-k\right)  }}{n^{r+k-2m}}+\sum
_{n=1}^{\infty}\frac{H_{n}^{\left(  r+k-2m\right)  }}{n^{p-k}}=\zeta\left(
p+r-2m\right)  +\zeta\left(  p-k\right)  \zeta\left(  r+k-2m\right)  .
\]
Moreover, for odd $m,$ it can be seen from (\ref{GES})\ that%
\begin{align*}
\sum_{n=1}^{\infty}\frac{H_{n}^{\left(  p-k\right)  }}{n^{r+k-m}}-\sum
_{n=1}^{\infty}\frac{H_{n}^{\left(  r+k-m\right)  }}{n^{p-k}}  &
=2\zeta_{H^{\left(  p-k\right)  }}\left(  r+k-m\right)  -\zeta\left(
p+r-m\right) \\
&  \quad-\zeta\left(  p-k\right)  \zeta\left(  r+k-m\right)  .
\end{align*}
Hence, we obtain the desired equation.
\end{proof}

\begin{remark}
\label{tablo}For interested readers we would like to list some values of
$\zeta_{H^{\left(  p\right)  }}\left(  r\right)  $, used in the evaluations of
$\zeta_{H^{\left(  p,4\right)  }}\left(  5\right)  ,$ $1\leq p\leq5,$ and
$\zeta_{H^{\left(  6,5\right)  }}\left(  6\right)  .$ These are calculated
with the help of (\ref{ES}), (\ref{GES}) and (\ref{rp}).

\noindent\hspace{-0.09in}%
\begin{tabular}
[c]{ll}%
$\bullet\text{ }\zeta_{H^{\left(  1\right)  }}\left(  2\right)  =2\zeta\left(
3\right)  ,$ & $\bullet\text{ }\zeta_{H^{\left(  3\right)  }}\left(  6\right)
=\frac{85}{2}\zeta\left(  9\right)  -\frac{7\pi^{2}}{2}\zeta\left(  7\right)
-\frac{\pi^{4}}{15}\zeta\left(  5\right)  ,$\medskip\\
$\bullet\text{ }\zeta_{H^{\left(  1\right)  }}\left(  3\right)  =\frac{\pi
^{4}}{72},$ & $\bullet\text{ }\zeta_{H^{\left(  4\right)  }}\left(  2\right)
=-\zeta^{2}\left(  3\right)  +\frac{37\pi^{6}}{11340},$\medskip\\
$\bullet\text{ }\zeta_{H^{\left(  1\right)  }}\left(  4\right)  =3\zeta\left(
5\right)  -\frac{\pi^{2}}{6}\zeta\left(  3\right)  ,$ & $\bullet\text{ }%
\zeta_{H^{\left(  4\right)  }}\left(  3\right)  =-17\zeta\left(  7\right)
+\frac{5\pi^{2}}{3}\zeta\left(  5\right)  +\frac{\pi^{4}}{90}\zeta\left(
3\right)  ,$\medskip\\
$\bullet\text{ }\zeta_{H^{\left(  1\right)  }}\left(  5\right)  =-\frac{1}%
{2}\zeta^{2}\left(  3\right)  +\frac{\pi^{6}}{540},$ & $\bullet\text{ }%
\zeta_{H^{\left(  4\right)  }}\left(  4\right)  =\frac{13\pi^{8}}{113\,400}%
,$\medskip\\
$\bullet\text{ }\zeta_{H^{\left(  2\right)  }}\left(  2\right)  =\frac
{7\pi^{4}}{360},$ & $\bullet\text{ }\zeta_{H^{\left(  4\right)  }}\left(
5\right)  =-\frac{125}{2}\zeta\left(  9\right)  +\frac{35\pi^{2}}{6}%
\zeta\left(  7\right)  +\frac{\pi^{4}}{18}\zeta\left(  5\right)  ,$\medskip\\
$\bullet\text{ }\zeta_{H^{\left(  2\right)  }}\left(  3\right)  =-\frac{9}%
{2}\zeta\left(  5\right)  +\frac{\pi^{2}}{2}\zeta\left(  3\right)  ,$ &
$\bullet\text{ }\zeta_{H^{\left(  5\right)  }}\left(  2\right)  =11\zeta
\left(  7\right)  -\frac{2\pi^{2}}{3}\zeta\left(  5\right)  -\frac{\pi^{4}%
}{45}\zeta\left(  3\right)  ,$\medskip\\
$\bullet\text{ }\zeta_{H^{\left(  2\right)  }}\left(  4\right)  =\zeta
^{2}\left(  3\right)  -\frac{\pi^{6}}{2835},$ & $\bullet\text{ }%
\zeta_{H^{\left(  5\right)  }}\left(  4\right)  =\frac{127}{2}\zeta\left(
9\right)  -\frac{35\pi^{2}}{6}\zeta\left(  7\right)  -\frac{2\pi^{4}}{45}%
\zeta\left(  5\right)  ,$\medskip\\
$\bullet\text{ }\zeta_{H^{\left(  2\right)  }}\left(  5\right)  =-10\zeta
\left(  7\right)  +\frac{5\pi^{2}}{6}\zeta\left(  5\right)  $ & $\bullet\text{
}\zeta_{H^{\left(  5\right)  }}\left(  5\right)  =\frac{1}{2}\zeta^{2}\left(
5\right)  +\frac{\pi^{10}}{187110},$\medskip\\
$\qquad\qquad\qquad+\frac{\pi^{4}}{45}\zeta\left(  3\right)  ,$ &
$\bullet\text{ }\zeta_{H^{\left(  5\right)  }}\left(  6\right)  =\frac{463}%
{2}\zeta\left(  11\right)  -21\pi^{2}\zeta\left(  9\right)  -\frac{7}{30}%
\pi^{4}\zeta\left(  7\right)  ,$\medskip\\
$\bullet\text{ }\zeta_{H^{\left(  3\right)  }}\left(  2\right)  =\frac{11}%
{2}\zeta\left(  5\right)  -\frac{\pi^{2}}{3}\zeta\left(  3\right)  ,$ &
$\bullet\text{ }\zeta_{H^{\left(  6\right)  }}\left(  3\right)  =\frac
{7\pi^{2}}{2}\zeta\left(  7\right)  -\frac{83}{2}\zeta\left(  9\right)
+\frac{\pi^{4}}{15}\zeta\left(  5\right)  $\\
$\bullet\text{ }\zeta_{H^{\left(  3\right)  }}\left(  3\right)  =\frac{1}%
{2}\zeta^{2}\left(  3\right)  +\frac{\pi^{6}}{1890},$ & $\qquad\qquad
\qquad+\frac{\pi^{6}}{945}\zeta\left(  3\right)  ,$\medskip\\
$\bullet\text{ }\zeta_{H^{\left(  3\right)  }}\left(  4\right)  =18\zeta
\left(  7\right)  -\frac{5\pi^{2}}{3}\zeta\left(  5\right)  ,$ &
$\bullet\text{ }\zeta_{H^{\left(  6\right)  }}\left(  5\right)  =21\pi
^{2}\zeta\left(  9\right)  -\frac{461}{2}\zeta\left(  11\right)  +\frac
{7\pi^{4}}{30}\zeta\left(  7\right)  $\\
& $\qquad\qquad\qquad+\frac{\pi^{6}}{945}\zeta\left(  5\right)  .$%
\end{tabular}

\end{remark}

\section{\textbf{Series involving harmonic numbers and reciprocal binomial
coefficients}}

In this section, we introduce evaluation formulas for some series involving
the harmonic numbers and their generalizations.

\begin{theorem}
\label{pro1}Let $p\geq1$ and $q,$ $l\geq0$ be integers with $l\geq q$. For
$m\geq1$,
\begin{align}
&  \sum_{n=1}^{\infty}\frac{H_{n}^{\left(  p,q\right)  }}{\left(  n+m\right)
\binom{n+m+l}{l}}\label{L5}\\
&  \ =\sum_{j=0}^{l-q}\binom{l-q}{j}\left\{  \frac{\left(  -1\right)
^{j+p-1}H_{m+j}}{\left(  m+j\right)  ^{p}}+\sum_{k=1}^{p-1}\frac{\left(
-1\right)  ^{j+k-1}}{\left(  m+j\right)  ^{k}}\zeta\left(  p+1-k\right)
\right\}  \nonumber
\end{align}
and
\begin{align}
&  \sum_{n=1}^{\infty}\frac{H_{n}^{\left(  p,q\right)  }}{n\binom{n+l}{l}%
}\label{L6}\\
&  \ =\zeta\left(  p+1\right)  -\sum_{j=1}^{l-q}\binom{l-q}{j}\left\{
\frac{\left(  -1\right)  ^{j+p}H_{j}}{j^{p}}+\sum_{k=1}^{p-1}\frac{\left(
-1\right)  ^{j+k}}{j^{k}}\zeta\left(  p+1-k\right)  \right\}  . \nonumber
\end{align}

\end{theorem}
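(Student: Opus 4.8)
The plan is to reduce both series to the function $\mu$ of (\ref{mu}) by passing through a beta-integral representation of the reciprocal binomial coefficient. The starting observation is that, for positive integers,
\[
\frac{1}{(n+m)\binom{n+m+l}{l}}=\frac{l!\,(n+m-1)!}{(n+m+l)!}=B(n+m,l+1)=\int_{0}^{1}x^{n+m-1}(1-x)^{l}\,dx.
\]
First I would substitute this into the left-hand side of (\ref{L5}) and interchange summation and integration. Since $p\geq1$ forces $H_{n}^{(p,q)}\geq0$ (iterate the nonnegativity through (\ref{0})) and the integrand is nonnegative on $[0,1]$, Tonelli's theorem legitimizes the interchange with no further estimates. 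Using the generating function (\ref{ghhgf}) together with $H_{0}^{(p,q)}=0$, the inner series collapses:
\[
\sum_{n=1}^{\infty}\frac{H_{n}^{(p,q)}}{(n+m)\binom{n+m+l}{l}}=\int_{0}^{1}x^{m-1}(1-x)^{l}\,\frac{Li_{p}(x)}{(1-x)^{q}}\,dx=\int_{0}^{1}x^{m-1}(1-x)^{l-q}Li_{p}(x)\,dx.
\]

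Next, because $l\geq q$ the exponent $l-q$ is a nonnegative integer, so I would expand $(1-x)^{l-q}=\sum_{j=0}^{l-q}\binom{l-q}{j}(-1)^{j}x^{j}$ and pull the finite sum outside the integral. Each remaining integral is evaluated term by term against the defining series of $Li_{p}$, again by nonnegativity:
\[
\int_{0}^{1}x^{m+j-1}Li_{p}(x)\,dx=\sum_{n=1}^{\infty}\frac{1}{n^{p}}\int_{0}^{1}x^{n+m+j-1}\,dx=\sum_{n=1}^{\infty}\frac{1}{n^{p}(n+m+j)}=\mu(p,m+j).
\]
Substituting the closed form (\ref{mu}) for $\mu(p,m+j)$ and absorbing the factor $(-1)^{j}$ into the brackets reproduces the summand of (\ref{L5}) verbatim, which closes the first identity.

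For (\ref{L6}) I would run the identical argument at $m=0$, starting from $\tfrac{1}{n\binom{n+l}{l}}=B(n,l+1)=\int_{0}^{1}x^{n-1}(1-x)^{l}\,dx$, which leads to $\int_{0}^{1}x^{-1}(1-x)^{l-q}Li_{p}(x)\,dx$. The only structural difference is the $j=0$ term of the binomial expansion: there the integral becomes $\int_{0}^{1}x^{-1}Li_{p}(x)\,dx=\sum_{n\geq1}n^{-(p+1)}=\zeta(p+1)$ rather than a $\mu$-value, since the formula (\ref{mu}) does not cover $j=0$. Splitting off this term and handling $j=1,\dots,l-q$ exactly as before gives the stated shape, with the global minus sign arising from $(-1)^{j+k-1}=-(-1)^{j+k}$ and $(-1)^{j+p-1}=-(-1)^{j+p}$.

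The manipulations are routine once the beta integral is in hand, so the only point needing care is the convergence of the resulting integrals near $x=1$, where the generating function carries a factor $(1-x)^{-q}$. This is exactly neutralized by the hypothesis $l\geq q$: the integrand behaves like $(1-x)^{l-q}Li_{p}(x)$, which is integrable on $[0,1]$ for every $p\geq1$ (even at $p=1$, where the singularity $Li_{1}(x)=-\ln(1-x)$ remains integrable against $(1-x)^{l-q}$ with $l-q\geq0$). I expect this integrability check, and the clean identification of the bracketed term with $(-1)^{j}\mu(p,m+j)$, to be the substantive content; the remainder is bookkeeping of signs and a single invocation of Tonelli's theorem.
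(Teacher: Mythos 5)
Your proposal is correct and is essentially the paper's own proof: the same beta-integral representation of the reciprocal binomial coefficient, the same use of the generating function (\ref{ghhgf}) to collapse the sum into $\int_{0}^{1}t^{m-1}(1-t)^{l-q}Li_{p}(t)\,dt$, the same binomial expansion of $(1-t)^{l-q}$ with identification of the resulting series as $\mu(p,m+j)$ via (\ref{mu}), and the same treatment of (\ref{L6}) by setting $m=0$ and splitting off the $j=0$ term as $\zeta(p+1)$. The only difference is that you make explicit the justification (Tonelli, nonnegativity, integrability near $t=1$) that the paper leaves implicit.
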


\begin{proof}
Using the formula (see \cite[p.909]{JZ})%
\[%
{\displaystyle\int\limits_{0}^{1}}
t^{n+m-1}\left(  1-t\right)  ^{l}dt=\frac{1}{\left(  n+m\right)  \binom
{n+m+l}{l}},
\]
we can write%
\[
\frac{H_{n}^{\left(  p,q\right)  }}{\left(  n+m\right)  \binom{n+m+l}{l}}=%
{\displaystyle\int\limits_{0}^{1}}
H_{n}^{\left(  p,q\right)  }t^{n+m-1}\left(  1-t\right)  ^{l}dt.
\]
With the help of (\ref{ghhgf}), we get%
\begin{align*}
\sum_{n=1}^{\infty}\frac{H_{n}^{\left(  p,q\right)  }}{\left(  n+m\right)
\binom{n+m+l}{l}}  &  =%
{\displaystyle\int\limits_{0}^{1}}
t^{m-1}\left(  1-t\right)  ^{l-q}Li_{p}\left(  t\right)  dt\\
&  =\sum_{j=0}^{l-q}\binom{l-q}{j}\left(  -1\right)  ^{j}\sum_{n=1}^{\infty
}\frac{1}{n^{p}\left(  n+m+j\right)  }.
\end{align*}
Then (\ref{L5}) follows from (\ref{mu}). If $m=0,$ then we have%
\[
\sum_{n=1}^{\infty}\frac{H_{n}^{\left(  p,q\right)  }}{n\binom{n+l}{l}}%
=\zeta\left(  p+1\right)  +\sum_{j=1}^{l-q}\binom{l-q}{j}\left(  -1\right)
^{j}\sum_{n=1}^{\infty}\frac{1}{n^{p}\left(  n+j\right)  },
\]
which is equivalent to (\ref{L6}).
\end{proof}

Now, we deal with some special cases of Theorem \ref{pro1}. Setting $q=l$
gives
\begin{equation}
\sum_{n=1}^{\infty}\frac{H_{n}^{\left(  p,q\right)  }}{\left(  n+m\right)
\binom{n+m+q}{q}}=\frac{\left(  -1\right)  ^{p-1}H_{m}}{m^{p}}+\sum
_{i=1}^{p-1}\frac{\left(  -1\right)  ^{i-1}}{m^{i}}\zeta\left(  p+1-i\right)
\label{17}%
\end{equation}
and
\begin{equation}
\sum_{n=1}^{\infty}\frac{H_{n}^{\left(  p,q\right)  }}{n\binom{n+q}{q}}%
=\zeta\left(  p+1\right)  . \label{18}%
\end{equation}
Note that the variable $q$ does not appear in the right-hand sides and all
these series converge very slowly.

For $p=1,$\textbf{ }(\ref{17}) and (\ref{18}) give Proposition 5 and
Proposition 6 in \cite{DB}%
\[
\sum_{n=1}^{\infty}\frac{h_{n}^{\left(  q\right)  }}{n\binom{n+q}{q}}=\frac
{1}{6}\pi^{2} \text{ \ \ and \ \ }\sum_{n=1}^{\infty}\frac{h_{n-1}^{\left(
q\right)  }}{n\binom{n+q}{q}}=1,
\]
respectively. (\ref{17}) also yields \cite[Eq. (2.30)]{XZZ} for $q=1$.
Additionally, when $p=1$ in Theorem \ref{pro1}, we reach that%
\[
\sum_{n=1}^{\infty}\frac{h_{n}^{\left(  q\right)  }}{\left(  n+m\right)
\binom{n+m+l}{l}}=\sum_{j=0}^{l-q}\binom{l-q}{j}\left(  -1\right)  ^{j}%
\frac{H_{m+j}}{m+j}%
\]
and
\[
\sum_{n=1}^{\infty}\frac{h_{n}^{\left(  q\right)  }}{n\binom{n+l}{l}}=\frac
{1}{6}\pi^{2}+\sum_{j=1}^{l-q}\left(  -1\right)  ^{j}\binom{l-q}{j}\frac
{H_{j}}{j}.
\]
Now employing \cite[Eq.(18)]{CHU}
\[
\sum_{k=0}^{m}\left(  -1\right)  ^{k}\binom{m}{k}\frac{H_{n+k}}{n+k}%
=\frac{H_{n+m}-H_{m}}{n\binom{n+m}{m}}%
\]
and \cite[Eq. (9.4b)]{B2018}
\[
\sum_{j=1}^{n}\left(  -1\right)  ^{j+1}\binom{n}{j}\frac{H_{j}}{j}%
=H_{n}^{\left(  2\right)  }
\]
gives the following closed forms for series involving hyperharmonic numbers
with reciprocal binomial coefficients.

\begin{corollary}
Let $q,l\geq0$ be integers with $l\geq q$. For all integers $m\geq1$
\[
\sum_{n=1}^{\infty}\frac{h_{n}^{\left(  q\right)  }}{\left(  n+m\right)
\binom{n+m+l}{l}}=\frac{H_{m+l-q}-H_{l-q}}{m\binom{n+m+l-q}{l-q}}%
\]
and
\[
\sum_{n=1}^{\infty}\frac{h_{n}^{\left(  q\right)  }}{n\binom{n+l}{l}}=\frac
{1}{6}\pi^{2}-H_{l-q}^{\left(  2\right)  }.
\]

\end{corollary}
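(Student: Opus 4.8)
The plan is to obtain both formulas by inserting the two finite-sum identities from \cite{CHU} and \cite{B2018} into the $p=1$ specializations of Theorem~\ref{pro1} displayed immediately above the corollary; no new analytic work is needed beyond this substitution, since those specializations are already in hand.

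For the first formula I would begin from
\[
\sum_{n=1}^{\infty}\frac{h_{n}^{\left(  q\right)  }}{\left(  n+m\right)
\binom{n+m+l}{l}}=\sum_{j=0}^{l-q}\binom{l-q}{j}\left(  -1\right)  ^{j}
\frac{H_{m+j}}{m+j}
\]
and match the right-hand side to the \cite{CHU} identity. The care here is purely in the bookkeeping: the generic upper index in the \cite{CHU} identity must be taken equal to $l-q$, while its generic shift index must be taken equal to our $m$, and there is a notational clash since \cite{CHU} reuses the letters $m$ and $n$ for its own indices. With these identifications the sum collapses to $\left(H_{m+l-q}-H_{l-q}\right)/\bigl(m\binom{m+l-q}{l-q}\bigr)$, which is the asserted value; the stray $n$ in the binomial coefficient printed in the statement is a typographical slip, as $n$ has already been summed away.

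For the second formula I would begin from
\[
\sum_{n=1}^{\infty}\frac{h_{n}^{\left(  q\right)  }}{n\binom{n+l}{l}}
=\frac{1}{6}\pi^{2}+\sum_{j=1}^{l-q}\left(  -1\right)  ^{j}\binom{l-q}{j}
\frac{H_{j}}{j}.
\]
The remaining finite sum carries the sign $(-1)^{j}$, whereas the \cite{B2018} identity is stated with $(-1)^{j+1}$; factoring out the overall minus sign and taking the upper index in that identity to be $l-q$ evaluates the sum to $-H_{l-q}^{(2)}$, which yields the claimed $\tfrac{1}{6}\pi^{2}-H_{l-q}^{(2)}$.

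Since both cited identities are already quoted in exactly the form required, there is no genuine obstacle here. The only thing to watch is the correct index matching in the \cite{CHU} identity and the sign reversal in the \cite{B2018} identity.
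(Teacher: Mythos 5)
Your proposal is correct and is exactly the paper's own argument: the paper derives the corollary by setting $p=1$ in Theorem \ref{pro1} and then applying the cited identities from \cite{CHU} and \cite{B2018}, with precisely the index matching and sign reversal you describe. You are also right that the $n$ appearing in the binomial coefficient $\binom{n+m+l-q}{l-q}$ in the statement is a typographical slip for $\binom{m+l-q}{l-q}$.
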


For $p=q=1,$ Theorem \ref{pro1} gives \cite[Eq. (2.31)]{SS}%
\[
\sum_{n=1}^{\infty}\frac{H_{n}}{\left(  n+m\right)  \binom{n+m+q}{q}}%
=\frac{H_{n+q-1}-H_{q-1}}{n\binom{n+q-1}{q-1}}%
\]
and \cite{SC}
\[
\sum_{n=1}^{\infty}\frac{H_{n}}{n\binom{n+l}{l}}=\frac{1}{6}\pi^{2}%
-H_{l-1}^{\left(  2\right)  }.
\]
For $q=1,$ (\ref{L6}) becomes
\begin{align}
&  \sum_{n=1}^{\infty}\frac{H_{n}^{\left(  p\right)  }}{n\binom{n+l}{l}%
}\label{sofo}\\
&  \ =\zeta\left(  p+1\right)  -\sum_{j=1}^{l-1}\binom{l-1}{j}\left\{
\frac{\left(  -1\right)  ^{j+p}H_{j}}{j^{p}}+\sum_{k=1}^{p-1}\frac{\left(
-1\right)  ^{j+k}}{j^{k}}\zeta\left(  p+1-k\right)  \right\}  , \nonumber
\end{align}
which is also given by Sofo \cite[Theorem 2.2]{S3} in a slightly different form.

Setting $q=1$ in (\ref{L5}) yields the following corollary involving
generalized harmonic numbers.

\begin{corollary}
For all integers $m,p,l\geq1,$
\begin{align}
&  \sum_{n=1}^{\infty}\frac{H_{n}^{\left(  p\right)  }}{\left(  n+m\right)
\binom{n+m+l}{l}}\label{26}\\
&  =\sum_{j=0}^{l-1}\binom{l-1}{j}\left(  -1\right)  ^{j}\left\{
\frac{\left(  -1\right)  ^{p-1}H_{m+j}}{\left(  m+j\right)  ^{p}}+\sum
_{k=1}^{p-1}\frac{\left(  -1\right)  ^{k-1}}{\left(  m+j\right)  ^{k}}%
\zeta\left(  p+1-k\right)  \right\}  . \nonumber
\end{align}

\end{corollary}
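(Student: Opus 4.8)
The plan is to obtain this corollary as the direct specialization $q=1$ of formula (\ref{L5}) in Theorem \ref{pro1}. First I would verify that the hypotheses line up: Theorem \ref{pro1} assumes $p\geq1$, $q,l\geq0$ with $l\geq q$, and $m\geq1$; putting $q=1$ these collapse to $p\geq1$, $l\geq1$, $m\geq1$, which are precisely the assumptions $m,p,l\geq1$ of the corollary. So the specialization is legitimate and no new convergence issue arises.

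Next I would invoke the identity $H_{n}^{\left(p,1\right)}=H_{n}^{\left(p\right)}$ recorded in the introduction (the $r=1$ case of the generalized hyperharmonic numbers). This immediately converts the left-hand side of (\ref{L5}) into $\sum_{n=1}^{\infty}H_{n}^{\left(p\right)}/\left(\left(n+m\right)\binom{n+m+l}{l}\right)$, which is exactly the left-hand side of (\ref{26}).

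On the right-hand side, substituting $q=1$ changes the outer summation range from $0\leq j\leq l-q$ to $0\leq j\leq l-1$ and replaces each coefficient $\binom{l-q}{j}$ by $\binom{l-1}{j}$. It then remains only to redistribute the signs inside the brace: since $\left(-1\right)^{j+p-1}=\left(-1\right)^{j}\left(-1\right)^{p-1}$ and $\left(-1\right)^{j+k-1}=\left(-1\right)^{j}\left(-1\right)^{k-1}$, the common factor $\left(-1\right)^{j}$ can be pulled out in front of the brace, leaving the terms $\left(-1\right)^{p-1}H_{m+j}/\left(m+j\right)^{p}$ and $\left(-1\right)^{k-1}\zeta\left(p+1-k\right)/\left(m+j\right)^{k}$ inside, which is precisely the form displayed in (\ref{26}).

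In short, there is no genuine obstacle here: the corollary is a notational re-packaging of (\ref{L5}) at $q=1$, and the only point requiring care is checking that factoring $\left(-1\right)^{j}$ out of both the harmonic-number term and the zeta-sum term reproduces exactly the signs $\left(-1\right)^{p-1}$ and $\left(-1\right)^{k-1}$ stated inside the brace.
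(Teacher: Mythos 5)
Your proposal is correct and coincides with the paper's own proof: the paper obtains the corollary precisely by setting $q=1$ in (\ref{L5}), using $H_{n}^{\left(p,1\right)}=H_{n}^{\left(p\right)}$, and pulling the common factor $\left(-1\right)^{j}$ out of the brace. The hypothesis check ($l\geq q$ becoming $l\geq1$) and the sign bookkeeping you spell out are exactly the routine details implicit in the paper's one-line derivation.
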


The following particular cases can be deduced setting $p=2$ and $p=3$:%
\begin{align}
\sum_{n=1}^{\infty}\frac{H_{n}^{\left(  2\right)  }}{\left(  n+m\right)
\binom{n+m+l}{l}}  &  =\frac{\pi^{2}}{6m\binom{m+l-1}{l-1}}-\sum_{j=0}%
^{l-1}\left(  -1\right)  ^{j}\binom{l-1}{j}\frac{H_{m+j}}{\left(  m+j\right)
^{2}},\label{L2}\\
\sum_{n=1}^{\infty}\frac{H_{n}^{\left(  3\right)  }}{\left(  n+m\right)
\binom{n+m+l}{l}}  &  =\sum_{j=0}^{l-1}\left(  -1\right)  ^{j}\binom{l-1}%
{j}\frac{H_{m+j}}{\left(  m+j\right)  ^{3}}\nonumber\\
&  +\frac{1}{m\binom{m+l-1}{l-1}} \left\{  \zeta\left(  3\right)  -\frac
{\pi^{2}}{6}\left(  H_{m+l-1}-H_{m-1}\right)  \right\}  .\nonumber
\end{align}

It is worth noting that the special case with choices $m=6$ and $l=3$ in
(\ref{L2}) is recorded in \cite[Remark 1]{S5} despite a misprint. The case is
below
\[
\sum_{n=1}^{\infty}\frac{H_{n}^{\left(  2\right)  }}{\left(  n+6\right)
\binom{n+9}{3}}=\frac{1}{168}\zeta\left(  2\right)  -\frac{37073}{7902720}.
\]

For our final results, we deal with the special case $q=2$ of Theorem
\ref{pro1}. By aid of (\ref{4.2}), $%
\genfrac{[}{]}{0pt}{}{q}{0}%
_{r}=r\left(  r+1\right)  \cdots\left(  r+q-1\right)  $ and $%
\genfrac{[}{]}{0pt}{}{q}{q}%
_{r}=1,$ we have%
\[
\sum_{n=1}^{\infty}\frac{H_{n}^{\left(  p\right)  }}{\binom{n+l}{l}}%
=\sum_{n=1}^{\infty}\frac{H_{n}^{\left(  p,2\right)  }}{n\binom{n+l}{l}}%
+\sum_{n=1}^{\infty}\frac{H_{n}^{\left(  p-1\right)  }}{n\binom{n+l}{l}}%
-\sum_{n=1}^{\infty}\frac{H_{n}^{\left(  p\right)  }}{n\binom{n+l}{l}},
\]
where $l$ is any integer greater than $1$. From (\ref{L6}), (\ref{sofo}) and
some arrangements we obtain
\[
\sum_{n=1}^{\infty}\frac{H_{n}^{\left(  p\right)  }}{\binom{n+l}{l}}%
=\zeta\left(  p\right)  +\sum_{j=1}^{l-1}\left(  -1\right)  ^{j}\left\{
\binom{l-1}{j}\mu\left(  p-1,j\right)  -\binom{l-2}{j-1}\mu\left(  p,j\right)
\right\}  ,
\]
where $\mu\left(  p,j\right)  $ is given in (\ref{mu}). A slightly different
form of the equation above is given in \cite[Theorem 2.1]{S3}. Similarly
\begin{align*}
\sum_{n=1}^{\infty}\frac{nH_{n}^{\left(  p\right)  }}{\left(  n+m\right)
\binom{n+m+l}{l}}  &  =\sum_{n=1}^{\infty}\frac{H_{n}^{\left(  p,2\right)  }%
}{\left(  n+m\right)  \binom{n+m+l}{l}}+\sum_{n=1}^{\infty}\frac
{H_{n}^{\left(  p-1\right)  }}{\left(  n+m\right)  \binom{n+m+l}{l}}\\
&  -\sum_{n=1}^{\infty}\frac{H_{n}^{\left(  p\right)  }}{\left(  n+m\right)
\binom{n+m+l}{l}}.
\end{align*}
Then exploiting (\ref{L6}) and (\ref{26}) in the last equation yields the
following corollary.

\begin{corollary}
Let $l>1$ be an integer. Then \textbf{ }%
\begin{align*}
&  \sum_{n=1}^{\infty}\frac{nH_{n}^{\left(  p\right)  }}{\left(  n+m\right)
\binom{n+m+l}{l}}\\
&  \ =\mu\left(  p-1,m\right)  +\sum_{j=1}^{l-1}\left(  -1\right)
^{j}\left\{  \binom{l-1}{j}\mu\left(  p-1,m+j\right)  -\binom{l-2}{j-1}%
\mu\left(  p,m+j\right)  \right\}  ,
\end{align*}
where $\mu\left(  p,j\right)  $ is given in (\ref{mu}).
\end{corollary}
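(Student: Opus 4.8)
The plan is to start from the three–term decomposition of $nH_{n}^{(p)}$ that has already been displayed just before the statement, namely
\[
\sum_{n=1}^{\infty}\frac{nH_{n}^{(p)}}{(n+m)\binom{n+m+l}{l}}
=\sum_{n=1}^{\infty}\frac{H_{n}^{(p,2)}}{(n+m)\binom{n+m+l}{l}}
+\sum_{n=1}^{\infty}\frac{H_{n}^{(p-1)}}{(n+m)\binom{n+m+l}{l}}
-\sum_{n=1}^{\infty}\frac{H_{n}^{(p)}}{(n+m)\binom{n+m+l}{l}},
\]
which is just (\ref{4.2}) with $q=1$ rewritten as $H_{n}^{(p,2)}=(n+1)H_{n}^{(p)}-H_{n}^{(p-1)}$. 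Rather than invoking the fully expanded formulas (\ref{L5}) and (\ref{26}), I would use the cleaner intermediate identity that appears inside the proof of Theorem \ref{pro1}, namely
\[
\sum_{n=1}^{\infty}\frac{H_{n}^{(p,q)}}{(n+m)\binom{n+m+l}{l}}
=\sum_{j=0}^{l-q}\binom{l-q}{j}(-1)^{j}\,\mu(p,m+j),
\]
since the target identity is itself phrased entirely in terms of the quantity $\mu$ defined in (\ref{mu}).

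Applying this with $(p,q)=(p,2)$ to the first sum and with $(p,q)=(p-1,1)$ and $(p,q)=(p,1)$ to the remaining two, the right-hand side becomes
\[
\sum_{j=0}^{l-2}\binom{l-2}{j}(-1)^{j}\mu(p,m+j)
+\sum_{j=0}^{l-1}\binom{l-1}{j}(-1)^{j}\mu(p-1,m+j)
-\sum_{j=0}^{l-1}\binom{l-1}{j}(-1)^{j}\mu(p,m+j).
\]
The two sums carrying $\mu(p,m+j)$ must now be merged. Here I would apply Pascal's rule $\binom{l-1}{j}=\binom{l-2}{j}+\binom{l-2}{j-1}$ to the last sum; the $\binom{l-2}{j}$ piece cancels the first sum exactly (using $\binom{l-2}{l-1}=0$ to absorb the extra upper term), leaving $-\sum_{j=1}^{l-1}\binom{l-2}{j-1}(-1)^{j}\mu(p,m+j)$ after observing that the $j=0$ term drops out because $\binom{l-2}{-1}=0$.

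For the $\mu(p-1,\cdot)$ sum I would peel off the $j=0$ term, which produces the stand-alone $\mu(p-1,m)$, and retain the tail from $j=1$ to $l-1$. Collecting the surviving pieces under a single sum over $1\le j\le l-1$ then yields
\[
\mu(p-1,m)+\sum_{j=1}^{l-1}(-1)^{j}\left\{\binom{l-1}{j}\mu(p-1,m+j)-\binom{l-2}{j-1}\mu(p,m+j)\right\},
\]
which is exactly the assertion. I expect the only delicate point to be the bookkeeping in the Pascal-rule step: one must track carefully that the upper limit $l-2$ of the $q=2$ sum and the shifted index in $\binom{l-2}{j-1}$ align so that every $\mu(p,m+j)$ contribution combines into the single $\binom{l-2}{j-1}$ coefficient, with all boundary terms vanishing automatically from the standard binomial conventions.
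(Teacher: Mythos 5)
Your proof is correct and takes essentially the same route as the paper: the identical three-term decomposition $nH_{n}^{(p)}=H_{n}^{(p,2)}+H_{n}^{(p-1)}-H_{n}^{(p)}$ (from the reduction relation with $q=2$), followed by applying Theorem \ref{pro1} — in its equivalent $\mu$-form, which is just (\ref{L5}) before $\mu$ is expanded via (\ref{mu}) — to each of the three series. The Pascal-rule bookkeeping you carry out explicitly is precisely the rearrangement the paper leaves implicit when it states that the corollary follows by exploiting its evaluation formulas.
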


For\textbf{ }$l=2$\textbf{ }this formula can be read as%
\begin{align*}
&  \sum_{n=1}^{\infty}\frac{nH_{n}^{\left(  p\right)  }}{\left(  n+m\right)
\left(  n+m+1\right)  \left(  n+m+2\right)  }\\
&  =\left(  -1\right)  ^{p+1}\frac{m+2}{2\left(  m+1\right)  ^{p}}%
H_{m+1}+\left(  -1\right)  ^{p}\frac{H_{m}}{2m^{p-1}}\\
&  \quad+\sum_{k=1}^{p-2}\left(  -1\right)  ^{k-1}\left\{  \frac{1}{2m^{k}%
}-\frac{1}{2\left(  m+1\right)  ^{k}}\right\}  \zeta\left(  p-k\right)
+\sum_{k=1}^{p-1}\frac{\left(  -1\right)  ^{k-1}}{2\left(  m+1\right)  ^{k}%
}\zeta\left(  p+1-k\right)  .
\end{align*}
The first few cases of this formula are listed below:%
\begin{align*}
\sum_{n=1}^{\infty}\frac{nH_{n}}{\left(  n+m\right)  \left(  n+m+1\right)
\left(  n+m+2\right)  }  &  =\frac{1}{2\left(  m+1\right)  }\left(
H_{m+1}+1\right)  ,\\
\sum_{n=1}^{\infty}\frac{nH_{n}^{\left(  2\right)  }}{\left(  n+m\right)
\left(  n+m+1\right)  \left(  n+m+2\right)  }  &  =\frac{\pi^{2}}{12\left(
m+1\right)  }-\frac{\left(  m+2\right)  }{2\left(  m+1\right)  ^{2}}%
H_{m+1}+\frac{1}{2m}H_{m},\\
\sum_{n=1}^{\infty}\frac{nH_{n}^{\left(  3\right)  }}{\left(  n+m\right)
\left(  n+m+1\right)  \left(  n+m+2\right)  }  &  =\frac{\zeta\left(
3\right)  }{2\left(  m+1\right)  }+\frac{\pi^{2}}{12m\left(  m+1\right)  ^{2}%
}\\
&  \ \ \ +\frac{m+2}{2\left(  m+1\right)  ^{3}}H_{m+1}-\frac{1}{2m^{2}}H_{m}.
\end{align*}

\end{document}